\documentclass[a4paper]{article}
\usepackage{RR}
\usepackage{hyperref}
\RRdate{Septembre 2008}

\def\THBio{\TH@add{BIO}{Syst\`emes biologiques}}%

\usepackage{mathrsfs}
\usepackage{amsfonts}
\usepackage{amsmath}
\usepackage{amssymb}
\usepackage{graphicx}

\newtheorem{re}{{\bf Remark}}
\newtheorem{thm}{Theorem}

\newtheorem{cor}{{\bf Corollary}}
\newtheorem{defi}{{\bf Definition}}
\newtheorem{prop}{{\bf Proposition}}
\newtheorem{lemme}{{\bf Lemma}}
\newtheorem{prob}{{\bf Problem}}
\newtheorem{ass}{{\bf Assumption}}
\newtheorem{notat}{{\bf Notation}}
\newenvironment{theorem}{\begin{thm} \rm}{\end{thm}}

\newenvironment{corollary}{\begin{cor} \rm}{\end{cor}}

\newenvironment{proposition}{\begin{prop} \rm}{\end{prop}}
\newenvironment{lemma}{\begin{lemme} \rm}{\end{lemme}}

\newenvironment{proof}{\paragraph{Proof:}}{\endpf\\}
\def\endpf{\hfill$\Box$\medskip}

\newcommand{\beq}{\begin{equation}}
\newcommand{\eeq}{\end{equation}}
\newcommand{\ie}{{\it i.e. }}

\newcommand{\eg}{{\it e.g. }}
\newcommand{\dst}{\displaystyle}
\newcommand{\etc}{{\it etc...}}
\bibliographystyle{siam}

\RRauthor{Ludovic Mailleret\thanks{INRA, UR 880, F-06903, Sophia Antipolis,
    France ({\tt ludovic.mailleret@sophia.inra.fr}).}
        \and Fr\'ed\'eric Grognard\thanks{INRIA, COMORE, F-06902, Sophia Antipolis, France ({\tt frederic.grognard@sophia.inria.fr}).}}

\RRtitle{Stabilit\'e globale et optimisation d'un mod\`ele impulsionnel g\'en\'eral de
  lutte biologique}

\RRetitle{Global stability and optimisation of a general impulsive biological control model\thanks{This 
        work was supported by the COLOR action ``LutIns\&co'' funded by INRIA Sophia Antipolis M\'editerran\'ee and by the Project ``IA2L'' funded by the SPE Department of INRA.}}

\RRresume{Un mod\`ele impulsionnel de lutte biologique est
  consid\'er\'e; il est compos\'e
  d'un mod\`ele proie-pr\'edateur g\'en\'eral en \'equations diff\'erentielles
  ordinaires compl\'et\'e par une partie discr\`ete d\'ecrivant l'introduction
  p\'eriodique de pr\'edateurs. Il est d\'emontr\'e qu'il existe une solution
  p\'eriodique invariante qui correspond \`a l'\'eradication des proies et une
  condition qui 
  permet d'assurer sa stabilit\'e globale asymptotique est donn\'ee. Un probl\`eme
  d'optimisation li\'e \`a l'utilisation pr\'eventive de la lutte biologique
  est alors consid\'er\'e. Il est suppos\'e que le budget par unit\'e de
  temps de la lutte biologique (c'est-\`a-dire le nombre de pr\'edateurs
  l\^ach\'es) est fixe et la meilleure r\'epartition de ce budget est recherch\'ee
  en termes de fr\'equence d'introduction. La fonction de co\^ut \`a minimiser
  est 
  le temps pris pour r\'eduire une invasion impr\'evue de ravageurs (les
  proies) 
  jusqu'\`a un niveau inoffensif. L'analyse montre que le probl\`eme
  d'optimisation consid\'er\'e admet un nombre infini de solutions. La prise
  en compte de la robustesse n\'ecessaire de la solution 
  optimale indique que la meilleure strat\'egie 
  consiste \`a utiliser des l\^achers de pr\'edateurs aussi fr\'equents (et
  donc aussi faibles) que possible. }

\RRabstract{
An impulsive model of augmentative biological control consisting of a general continuous predator-prey model in ordinary differential equations augmented by a discrete part describing periodic introductions of predators is considered. It is shown that there exists an invariant periodic solution that corresponds to prey eradication and a condition ensuring its global asymptotic stability is given. An optimisation problem related to the preemptive use of augmentative biological control is then considered. It is assumed that the per time unit budget of biological control (\ie the number of predators to be released) is fixed and the best deployment of this budget is sought after in terms of release frequency. The cost function to be minimised is the time taken to reduce an unforeseen prey (pest) invasion under some harmless level. The analysis shows that the optimisation problem admits a countable infinite number of solutions. An argumentation considering the required robustness of the optimisation result is then conducted and it is shown that the best deployment is to use as frequent (and thus as small) as possible predator introductions.}

\RRmotcle{Dynamique proie-pr\'edateur, mod\`ele impulsionnel, stabilit\'e
  globale, optimisation}

\RRkeyword{ 
Predator-prey dynamics, impulsive model, global stability, optimisation.
}

\RRprojet{Comore}
\RRtheme{\THBio}
\URSophia
\begin{document}
\RRNo{6637}

\makeRR


\section{Introduction}

The biological control method intends to reduce harmful organisms populations and relies on the utilisation of their natural enemies, the biological control agents. It can be used to control \eg vectors in vector-borne diseases like malaria or invasive species (may they be animals or plants), but it is mostly dedicated to the control of insect pests, especially at the agricultural cropping system scale \cite{Murdoch1985}. Three main biological control strategies can be identified:
\begin{itemize}
\item conservation biological control: biocontrol agents are already present in the system and are favoured through various means (habitat management...).
\item classical biological control: a small population of biocontrol agents is introduced once in the system and a long term equilibrium between the pests and themselves is targetted.
\item augmentative biological control: biocontrol agents are repeatedly introduced in order to eradicate the pest population.
\end{itemize}

In this contribution we focus on this last method, which is fairly well modelled by an impulsive system of ordinary differential equations (a  continuous system affected by sudden state modifications at some discrete time moments). The intrinsic dynamics of the two interacting populations (pests and biocontrol agents) are described by the continuous part of the system and the repeated (periodic) biocontrol agents introductions  by the discrete one. Early examples of the use of impulsive models in biology are \cite{Ebenhoh1988,Funasaki1993} in chemostat modelling. Since then, they have received considerable attention in the area of epidemiology \cite{D'onofrio2002a,Hui2004a,Shulgin1998}, cancer treatment  \cite{D'onofrio2004,D'onofrio2005,Lakmeche2000}, populations dynamics \cite{Liu2002,Tang2003} or integrated pest management \cite{Liu2005a,Nundloll2008,Tang2005a,Zhang2007a}, to cite some. Though there are numerous studies on bio-inspired impulsive models, to our knowledge only a few of these is concerned with optimisation: see  \cite{Dong2007,Erdlenbruch2007,Meng2007,Tang2006,Xiao2006}. Here we focus both on a global convergence result for a rather general predator-prey model with periodic impulsive predator introductions and on a problem of optimisation on how to best deploy these introductions in the context of crop protection through biological control. Our results follow and generalise  earlier ones that were obtained in a more restricted context  \cite{Mailleret2006}.

This paper is organised as follows. First, the impulsive predator-prey model of augmentative biological control that will be payed attention to throughout the paper is presented. The hypotheses assumed on the biological part (continuous system) are qualitative so that they can encompass a large number of classical biological/ecological functions. The existence of an invariant  periodic prey-eradication solution is proved and a sufficient condition that ensures its global asymptotic stability is given. Then, we focus on an optimisation problem in the perspective of preemptive/preventative use of  augmentative biological control which particularly suits the protection of high valued agricultural crops like \eg orchards, vegetables, mushrooms or ornamentals   \cite{Williams2001,Fenton2001,Hulshof2003,Jacobson2001a,Jacobson2001,Skirvin2002}. It is assumed that a fixed budget ensuring pest eradication is allowed and that only its deployment (a tradeoff between frequent-small or rare-large releases) may be modified. The cost function to be minimised is the time taken to reduce an unforeseen prey (pest) invasion under some harmless level and the input is the impulsive release frequency. It is first shown that the local optimisation problem has a countable infinite number of solutions and, in a second step using an argumentation based on the needed robustness of our result, it is proved that the locally optimum solution is to choose as frequent (and thus as small) as possible releases. It is also shown that this strategy gives a sub-optimal solution of the global optimisation problem. Moreover, the larger the frequency, the lower the cost function, so that even if one cannot achieve a very large frequency, it is always interesting to choose the largest possible one. A Monte Carlo like numerical simulation illustrates our analytical optimisation result and the article is concluded by some recommendations to biological control practitioners that follow from our analysis.

\section{Model Description and Analysis}

\subsection{A general impulsive biological control model}

In this paper, we  assume that the tri-trophic system (crop - pests - biocontrol agents) may be well represented by its bi-trophic simplification representing the prey (pest) predator (biocontrol agent) interactions. The underlying assumption is that the prey population remains at moderate levels so that the crop is not limiting and thus do not need to be taken into account in the model. We consider the simple case in which no inter specific interactions within the predator population affects the predator prey interaction, though some in the prey population may occur (see Remark$~$1 below). According to classical predator-prey modelling, we get the following two-dimensional model:
\begin{equation}\label{continuous_dyn}
\left\{\begin{array}{l}
\dot{x}=f(x)-g(x)y\\
\dot y=h(x)y-my
\end{array}\right.
\end{equation}
$x$ denoting the preys and $y$ the predators. $f(x)$ denotes the preys' growth velocity, $g(x), h(x)$ and $m$ denotes the predators' functional response, numerical response and mortality rate, respectively. 
Since biological processes are always difficult to model, we only assume general qualitative hypotheses on the functions $f(.),~g(.)$ and $h(.)$:

\vspace*{0.5cm}
\noindent \textsc{Hypotheses 1 (H1):} \textit{ Let $f(.),~g(.)$ and $h(.)$ be locally Lipschitz functions on $\mathbb{R}^+$ such that:
\begin{enumerate}\item[(i)] $f(0)=0$
\item[(ii)] $g(0)=0,~g'(0)>0$ and $\forall x> 0,~g(x)> 0$
\item[(iii)] the function $\frac{\dst f(x)}{\dst g(x)}$ is upper bounded for all positive $x$
\item[(iv)] $h(0)=0$ and $\forall x>0,~h(x)>0$
\end{enumerate}}

{\it Remark 1:} A large part of the predator-prey functions encountered in the literature fit these hypotheses: (H1-i) only indicates that no spontaneous generation of pests (prey) is possible; $f(.)$ might be constant, linear, logistic or model an Allee effect \etc (H1-ii) means that there is no prey consumption as the prey level is zero and that if some prey are present, the predator are able to find and consume them. Moreover, $g(x)$ is supposed to be increasing at the origin. As a consequence $g(x)$ may be a Holling I or II function. $g(x)$ might even be non-monotonic (in case of \eg prey group defence abilities) like in Holling IV, but in this case from (H1-iii) $f(x)$ must be of type IV too or become negative for high $x$ values (like for logistic growth). 
Notice however that Holling III like functional response (\ie those with null derivative at $x=0$) fall beyond the scope of this study. No hypotheses are made on the numerical response $h(x)$ except (H1-iv) which is quite natural. Indeed, as it will be clearer in the following, our argumentation is mainly based on the fact that $g(x)$ is positive for positive $x$.

We now model the augmentative biological control procedure as the $T$-periodic  releases  of biological control agents (with $T>0$). Releases are, by their very nature, discrete phenomena: every time $nT$ ($n\in\mathbb{N}$) some predators are instantly added into the system. Let us suppose that there is a fixed rate of predators $\mu$ (\ie number of predators per unit time) to be released in the system. Notice that such a quantity $\mu$ is a direct measure of the costs of the biological control method over some time period. Hence in the following it will be referred to as well as the release rate or the biological control budget. 

This results in the following: at each time period $T$, $\mu T$ predators are added to the predator population $y$, yielding the discrete process:
\begin{equation}\label{dicrete_releases}\forall n\in\mathbb{N}, ~y(nT^+)=y(nT)+\mu T\end{equation}

Combining \eqref{continuous_dyn} and (\ref{dicrete_releases}) yields the following impulsive biological control model, similar to, but more general than the ones presented \eg in \cite{Liu2005,Liu2005a}:
\begin{equation}\label{model}
\left\{\begin{array}{l}
\dot x=f(x)-g(x)y\\
\dot y=h(x)y-my\\
\forall n\in\mathbb{N},~y(nT^+)=y(nT)+\mu T
\end{array}\right.
 \end{equation}

{\it Remark 2:} It is quite easy to check from (H1) that the proposed model is, as required for biological models in general, a non-negative system (see \eg \cite{Mailleret2008}), \ie the non-negative orthant of the state space is positively invariant by system \eqref{model}. Then every non-negative initial conditions $x_0$ and $y_0$ at  initial time $t_0$ (that only make sense with respect to the considered problem) produce through model \eqref{model} non-negative trajectories $x(t)$ and $y(t)$ for all positive time.

\subsection{Global convergence}

In this section we show that provided the predator release rate $\mu$ is greater than some value, augmentative biological control is able to drive any pest population to zero. This result is summarised into the following Theorem.

\begin{theorem}\label{theo_stabi}
Under Hypotheses 1, model (\ref{model}) possesses a ``pest free" $T$-periodic solution:
\begin{equation}\label{sol_proph}
\left(x_p(t),y_p(t)\right)=\left(0,\frac{\dst\mu Te^{-m(t\mod{T})}}{1-e^{-mT}}\right)
\end{equation}
which is locally asymptotically stable if and only if:
\beq\label{cond_stab} 
\mu>\frac{m f'(0)}{g'(0)}
\eeq 
and globally asymptotically stable if:
\beq\label{cond_stab_glob} 
\mu>S\triangleq \sup_{x\geq 0} \frac{mf(x)}{g(x)}
\eeq
\end{theorem}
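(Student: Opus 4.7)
My plan is to handle the three assertions of Theorem~\ref{theo_stabi} in sequence.

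\emph{Periodic solution and local stability.} Substituting $x\equiv 0$ into (\ref{model}) and using $f(0)=g(0)=h(0)=0$ from (H1) shows that $\{x=0\}$ is invariant and that $\dot y = -my$ on it; imposing $T$-periodicity on $y$ then forces the fixed point $y(0^+)=\mu T/(1-e^{-mT})$, which yields the closed form (\ref{sol_proph}). For (\ref{cond_stab}) I would linearise (\ref{model}) around $(x_p,y_p)$: since $g(0)=0$, the variational system is upper triangular in $(\delta x,\delta y)$ with diagonal entries $f'(0)-g'(0)y_p(t)$ and $-m$, and the impulse map is the identity on variations. Using the explicit integral $\int_0^T y_p(t)\,dt = \mu T/m$, the two Floquet multipliers over one period are $\exp\bigl(T(f'(0)-g'(0)\mu/m)\bigr)$ and $e^{-mT}<1$, so the standard Floquet criterion for impulsive systems (\cite{Lakmeche2000}) makes local asymptotic stability equivalent to (\ref{cond_stab}).

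\emph{Global stability, strategy.} The idea is to couple an asymptotic lower bound on $y$ with a scalar function of $x$ that strictly decreases on average. From (H1-iv), $h\geq 0$ on $\R^+$, so $\dot y\geq -my$ and the auxiliary $\bar y$ solving $\dot{\bar y}=-m\bar y$ with the same impulses and $\bar y(0^+)=y(0^+)$ satisfies $y\geq \bar y$; the affine recurrence $\bar y((n+1)T)=(\bar y(nT)+\mu T)e^{-mT}$ converges to its fixed point, giving $\bar y(t)\to y_p(t)$ uniformly in $t\bmod T$, so for any $\eta>0$ there is $N_0$ with $y(t)\geq y_p(t)-\eta$ for $t\geq N_0T$. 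I then introduce $F(x)=\int_1^x ds/g(s)$, which by (H1-ii) is well defined and strictly increasing on $(0,\infty)$, tends to $-\infty$ as $x\to 0^+$ (because $g(s)\sim g'(0)s$ near $0$), is unaffected by the impulses (which touch only $y$), and satisfies $\dot F = f(x)/g(x)-y$ along any trajectory with $x>0$.

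\emph{Global stability, conclusion.} Integrating $\dot F$ over $[kT,(k+1)T]$ for $k\geq N_0$, using (H1-iii) in the form $f(x)/g(x)\leq S/m$ and the lower bound on $y$, I obtain
\[
F(x((k+1)T))-F(x(kT)) \leq \frac{(S-\mu)T}{m} + \eta T.
\]
By (\ref{cond_stab_glob}) $\mu>S$, so picking $\eta<(\mu-S)/m$ makes the right-hand side a strictly negative constant; iterating, $F(x(kT))\to -\infty$, hence $x(kT)\to 0$. To propagate this to all $t$, I use $\dot x\leq f(x)$ and compare $x$ on $[kT,(k+1)T]$ with the scalar flow $\dot\phi=f(\phi)$, $\phi(kT)=x(kT)$; continuous dependence around the trivial solution $\phi\equiv 0$ yields $\sup_{t\in[kT,(k+1)T]}x(t)\to 0$. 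Continuity of $h$ with $h(0)=0$ then gives $h(x(t))\to 0$, so eventually $\dot y\leq(-m+\eta')y$; sandwiching $y$ between two converging impulsive linear scalar equations yields $y(t)\to y_p(t)$, proving global asymptotic stability.

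\emph{Main obstacle.} The delicate point is to make the $F$-argument watertight: one needs that trajectories with $x(0)>0$ remain in $(0,\infty)$ for all time (immediate from uniqueness of the continuous flow and the invariance of $\{x=0\}$) and stay globally defined, the latter following from $\dot x\leq f(x)$ together with the local Lipschitz hypothesis on $f$. Trajectories starting from $x(0)=0$ are covered directly by the Step~1 analysis, so no case is lost.
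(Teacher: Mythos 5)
Your proposal is correct and follows essentially the same route as the paper: existence of the periodic solution via the scalar impulsive recursion on the invariant set $\{x=0\}$, local stability via the Floquet multipliers of the triangular variational system (using $\int_0^T y_p(t)\,dt=\mu T/m$ and the fact that the jump map is the identity on variations), and global convergence via the function $\int^x ds/g(s)$ together with the lower bound $y\geq y_p-\eta$ obtained from $\dot y\geq -my$ by comparison, finishing with $y\to y_p$ once $h(x)$ is small. The only step to add is the one-line remark that \eqref{cond_stab_glob} implies \eqref{cond_stab} (since $f(x)/g(x)\to f'(0)/g'(0)$ as $x\to 0^+$, hence $S\geq mf'(0)/g'(0)$), which upgrades your global attractivity conclusion to genuine global asymptotic stability — the paper closes its proof with exactly this observation; also, a cosmetic point: in the coordinates $(\delta x,\delta y)$ the variational matrix is lower, not upper, triangular, which changes nothing in the multiplier computation.
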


{\it Remark 3:} Note that from (H1-iii) $S$ as defined in \eqref{cond_stab_glob} does exist.

\begin{proof} 
We first focus on the ``pest free'' set: $\{(x,y)\in\mathbb{R}^2_+,~x=0\}$, which is clearly invariant by system (\ref{model}) through (H1-i) and (H1-ii). Within this set, according to (H1-iv), system (\ref{model}) becomes:
\begin{equation}\label{model_prophy}
\left\{\begin{array}{l}
\dot x=0\\
\dot y=-my\\
\forall n\in\mathbb{N},~y(nT^+)=y(nT)+\mu T
\end{array}\right.
 \end{equation}
that yields: $y((n+1)T^+)=y(nT^+)e^{-mT}+\mu T$. It is clear that the sequence $(y(nT^+))_{n\in\mathbb{N}}$ has a single and globally stable equilibrium $y^\star=\mu T/(1-e^{-mT})$. Then, system (\ref{model_prophy}) trajectories globally converges towards the $T$-periodic solution:
\[\forall t\in(nT,(n+1)T],~(x(t),y(t))=\left(0,\frac{\mu T e^{-m(t-nT)}}{1-e^{-mT}}\right)
\]
which is indeed the same as (\ref{sol_proph}), so that the existence of the $T$-periodic pest free solution for system \eqref{model} has been proved.

We now concentrate on the stability of the pest free solution (\ref{sol_proph}) for system (\ref{model}), that is to say we do not restrict ourselves to the pest free set. We first change system \eqref{model} variables to consider the  deviations from the pest free solution that are denoted $(\tilde x,~\tilde y)$ so that:
\[(\tilde{x}(t),\tilde{y}(t))=(x(t),y(t))-(x_p(t),y_p(t))
\]
that yields:
\beq\label{sys_eps}
\left\{\begin{array}{l}
\dot{\tilde{x}}=f(\tilde{x})-g(\tilde{x})(\tilde{y}+y_p(t))\\
\dot{\tilde{y}}=h(\tilde{x})(y_p(t)+\tilde{y})-m\tilde{y}
\end{array}\right.
\eeq

We first investigate local asymptotic stability (LAS) of the periodic solution $(x_p(t),~y_p(t))$. Assuming $(\tilde x,~\tilde y)$ are small, we get from \eqref{sys_eps} at first order in $\tilde{x}$ and $\tilde{y}$:
\begin{equation}\label{sys_lin}
\left\{\begin{array}{l}
\dot{\tilde{x}}=(f'(0)-g'(0) y_p(t))\tilde{x}\\
\dot{\tilde{y}}=h'(0)y_p(t) \tilde{x}-m \tilde{y}
\end{array}\right.
\end{equation}
which is a linear system (in $\tilde{x}$ and $\tilde{y}$) with $T$-periodic
coefficients. The LAS of $(x_p(t),y_p(t))$ for the original system \eqref{model} is equivalent to the LAS of $(0,0)$ for system \eqref{sys_lin}. The classical approach (Floquet Theory) is to compute the discrete dynamical system that maps the variables at some time to the variables one period of the periodic coefficients later. The origin is then asymptotically stable if and only if the obtained discrete dynamical system is asymptotically stable (see \eg \cite{Chicone1999} for the theory and \cite{Liu2005a} for an application comparable to ours). Here, we get:
\[\left(\begin{array}{c}
\tilde x\\
\tilde y
\end{array}
\right)\left((n+1)T^+\right)=\left(\begin{array}{cc}
\dst e^{\int_{nT}^{(n+1)T} f'(0)-g'(0)y_p(\tau)d\tau} & 0\\
\dag & \dst e^{-mT}                         \end{array}\right)
\left(\begin{array}{c}
\tilde x\\
\tilde y
\end{array}
\right)\left(nT^+\right)
\]
the matrix being lower triangular, the $\dag$ term does not influence the system's stability so there is no need to compute it. Clearly $e^{-mT}$   belongs to $(0,1)$, and \linebreak$e^{\int_0^T(f'(0)-g'(0)y_p(\tau))d\tau}$ is positive. Then, $(0,0)$ is LAS for \eqref{sys_lin} (\ie $(x_p,y_p)$ is LAS for \eqref{model}) iff this latter coefficient is lower than one \ie:
\begin{equation}\begin{array}{ccl}\label{LAS}
\dst \int_0^T(f'(0)-g'(0)y_p(\tau))d\tau<0 &~\Leftrightarrow~& f'(0)T<\dst\int_0^Tg'(0)\frac{\dst\mu Te^{-m\tau}}{1-e^{-mT}}\ d\tau\\[.35cm]
&~\Leftrightarrow~&\mu>\dst\frac{mf'(0)}{g'(0)}
  \end{array}
\end{equation}
so that the LAS of $(x_p,y_p)$ part of the Theorem is shown.

We now focus on the global asymptotic stability (GAS) of the pest free solution. From now on, we assume that system \eqref{model} is initiated at $(x_0,y_0)$ at time $t_0\geq 0$, \ie system \eqref{sys_eps} is initiated at $(\tilde{x_0},\tilde{y_0})=(x_0,y_0-y_p(t_0))$ at time $t_0\geq 0$.\\ Let us consider the following function:
\[G(\tilde{x})=m\int_{\varkappa}^{\tilde{x}} \frac{1}{g(s)}\ ds\]
for some $\varkappa>0$. Since from (H1-ii) $g(x)$ is positive for positive $x$, $G(.)$ is an increasing function from $\tilde{x}=0$, where it goes to $-\infty$ since $g(.)$ is locally Lipschitz on $\mathbb{R}^+$. In the following we  investigate $G(\tilde{x}(t))$ behaviour as time $t$ goes to infinity. We have:
\begin{eqnarray}
G(\tilde{x}(t))-G(\tilde{x}_0)&=& m\dst \int_{\tilde{x}_0}^{\tilde{x}(t)} \frac{1}{g(s)}\ ds \nonumber \\
&=& m\dst \int_{t_0}^{t} \frac{\dot{\tilde{x}}(\tau)}{g(\tilde{x}(\tau))}\ d\tau \nonumber\\
&=& \dst \int_{t_0}^{t}\left( \frac{mf(\tilde{x}(\tau))}{g(\tilde{x}(\tau))}-m(\tilde{y}(\tau)+y_p(\tau))\right)\ d\tau\label{G1}
\end{eqnarray}
Considering system \eqref{sys_eps} together with (H1-iii) and Remark 2, we get:
\[\dot{\tilde{y}} \geq -m \tilde{y}
\]
so that, through standard arguments:
\beq\label{y_minim}\tilde y(t)\geq \min(0,\tilde y_0) e^{-m(t-t_0)}\eeq
Using \eqref{G1} and the definition of $S$ in \eqref{cond_stab_glob}, we then have for all $t\geq t_0$:
\begin{eqnarray}
G(\tilde{x}(t))-G(\tilde{x}_0)&\leq& \dst \int_{t_0}^{t} \left(S-m y_p(\tau)-m\min(0,\tilde y_0) e^{-m(\tau-t_0)}\right) d\tau\nonumber \\
&=& \int_{t_0}^{\left(\left\lfloor\frac{t_0}{T}\right\rfloor+1\right)T} \left(S-m y_p(\tau)\right)d\tau\nonumber\\&&+\left(\left\lfloor\frac{t}{T}\right\rfloor-\left\lfloor\frac{t_0}{T}\right\rfloor-1\right)\int_0^T\left(S-m y_p(\tau)\right)d\tau\nonumber\\
&& +\int_{\left\lfloor\frac{t}{T}\right\rfloor T}^t \left(S-m y_p(\tau)\right)d\tau+\min(0,\tilde y_0) \left(e^{-m(t-t_0)}-1\right)~~~~~~~~~~\label{G2}
\end{eqnarray}
since $y_p(t)$ is $T$-periodic. It is clear that the first, third and fourth term of the right hand side of \eqref{G2} are upper bounded. Now, suppose that \eqref{cond_stab_glob} holds, then in a similar way than in \eqref{LAS}:
\[\int_0^T\left(S-m y_p(\tau)\right)d\tau<0
\]
so that, since $\left\lfloor\frac{t}{T}\right\rfloor$ goes to infinity as $t$ does, the right hand side of \eqref{G2} goes to $-\infty$. Then, condition \eqref{cond_stab_glob} implies:
\[\lim_{t\rightarrow+\infty} G(\tilde{x}(t))=-\infty~~\Leftrightarrow~~\lim_{t\rightarrow+\infty} \tilde{x}(t)=0
\]
One can now prove that $\tilde{y}$ converges to zero as well. Indeed, from \eqref{y_minim}:
\[\tilde{y}_0\leq 0~~\Rightarrow~~\tilde y(t)\geq \tilde{y}_0e^{-m(t-t_0)}\]
so that, either $\tilde{y}(t)$ converges to zero in infinite time from below, or it reaches the positively invariant region where $\tilde{y}\geq 0$ in finite time. Up to initial time $t_0$ translation, we then have to consider the positive $\tilde{y}_0$ only. Since $\tilde{x}$ converges to zero and $h(0)=0$, it is clear that there exists a time $t_f$ such that:
\[\forall t> t_f,~h(\tilde{x}(t))\leq \frac{m}{2}~~\Rightarrow~~\forall t> t_f,~\dot{\tilde{y}}\leq h(\tilde{x})y_p(t)-\frac{m}{2}\tilde{y}\]
Since $h(\tilde{x})y_p(t)$ goes to zero as $t$ goes to infinity, so does $\tilde{y}$. We have shown that if condition \eqref{cond_stab_glob} holds, $(0,0)$ is globally attractive for \eqref{sys_eps}, \ie $(x_p(t),y_p(t))$ is globally attractive for \eqref{model}.\\
It is then easily checked through the Hospital rule, (H1-i) and (H1-ii) that:
\[\lim_{x\rightarrow 0^+} \frac{f(x)}{g(x)}=\frac{f'(0)}{g'(0)}
\] 
Then, condition \eqref{cond_stab_glob} implies that the necessary and sufficient condition \eqref{cond_stab} for LAS of $(x_p,y_p)$ holds true, which concludes the proof.\end{proof}

Theorem \ref{theo_stabi} ensures that when the condition \eqref{cond_stab_glob} is verified, the extinction of the prey (pest) population is GAS on the positive orthant of the state space. If the local  condition \eqref{cond_stab} were not satisfied, the pest free solution would then no more be stable. Using bifurcation theory for impulsive systems proposed by \cite{Lakmeche2000}, one then should be able to perform a bifurcation analysis, similar to what is done \eg in \cite{Liu2005a,Negi2007} to show the rich dynamics that system \eqref{model} would produce. However, as stated in the introduction, with respect to our application we concentrate here on the pest eradication problem only. In the case where condition \eqref{cond_stab} holds true, but condition \eqref{cond_stab_glob} does not, the pest free solution is still locally stable but, since our global stability condition \eqref{cond_stab_glob} is only sufficient, we cannot rule out the possibility that it is also globally stable. Such a release rate has the advantage of being smaller than the one required for GAS so that it would be cheaper for the grower to choose a one as such: it allows good control of limited pests invasions. The price to pay is however the risk of considerable crop damages if a large pest invasion  occurs.

It is to be noted that the stability conditions \eqref{cond_stab} and \eqref{cond_stab_glob} on the release rate $\mu$ are both independent of the release period $T$. Therefore, once a release rate $\mu$ fulfilling a  stability condition is chosen, pest eradication could be achieved independently of the choice of the release period $T$. Indeed, both infrequent and large releases ($T$ and $\mu T$ large) or frequent and small  releases ($T$ and $\mu T$ small) would ultimately drive the pest population to zero. Though there is no difference in the long term, different values of the release period $T$ might however result in different outcomes for the transient dynamics of the system. Hence in the following, we will take advantage of the adjustable parameter $T$ to address the practically important question: how to ``best" (in a sense that will be detailed in the sequel) deploy  a given biological control budget $\mu$ to eradicate pest outbreaks ?

\section{Optimal Choice of the Release Period in the Preemptive Case\label{sec:opt}}

\subsection{Statement of the optimisation problem}\label{problem}

Throughout the following, we assume that the condition for stability  of the pest free solution holds true. We focus on preemptive/preventative use of biological control agents releases: we suppose that biological control agents are released in anticipation of pests outbreaks, so that natural enemies would be able to fight the pest right at the time of their invasion. This departs from most classical biological control procedures  which are of a more ``feedback" type: biocontrol agents are released once the pests have been detected only. Such a preventative approach has however been payed attention to since it appears as achieving more acceptable pest control, especially for high valued crops that are very sensitive to the slightest pest outbreak (see \cite{Fenton2001,Skirvin2002,Fenton2002}  for theoretical/simulatory studies and \cite{Ehler1997,Jacobson2001,Jacobson2001a,Williams2001,Hulshof2003} for real life experiments and a biological perspective). Hence in the following, we suppose that preventative releases are being performed and that the system is in the invariant set $\{\tilde y\geq 0\}$ as a pest population $x_0$ invades the crop at some unforeseen moment $t_0$.

Our goal is to provide recommendations on how to deploy (\ie how to choose $T$) such a preemptive biological control strategy to minimise crop damages due to a pest outbreak. To evaluate crop damages induced by the pests, we use the the concept of ``Economic Injury Level" (EIL) that has been introduced from the early bases of theoretical biological control \cite{Stern1959}. EIL (denoted $\bar{x}$ in the sequel) is defined as the ``lowest (positive) pest population level that will cause economic losses on the crop". $\bar{x}$ is then a constant parameter of our problem. For a given pest outbreak $x_0>\bar{x}$, we consider that the damage cost $J$ is an increasing function of the time spent by the pest population above the EIL $\bar{x}$, \ie:
\[J=\int_{t_0}^{t_f} \gamma(\tau)d\tau\]
with $t_f$ defined such that $x(t_f)=\bar{x}$ and $\gamma(.)>0$. It is clear that minimising $J$ is equivalent to minimising $\Pi=(t_f-t_0)$ so that we will only concentrate on this latter in the following. 

We are left with two more problems. The first one is that, for given $x_0$ and $T$, the damage cost $J$ (or $\Pi$) strongly depends on the pest outbreak instant $t_0$, \ie for a given $T$, the same invading pest level $x_0$ yields different damage cost depending on the value of $t_0$. Without loss of generality, from now on we will consider that $t_0\in[0,T)$. We seek to provide the safest biological control procedure so that we will concentrate on the damage cost for the worst $t_0$, ensuring any other $t_0$ would result in smaller damage costs. Hence in the following, we seek the $T$ that  minimises the damage cost for its worst $t_0$ \ie we seek to minimise $\dst\max_{t_0\in[0,T)}\Pi$.

The remaining problem is that our analysis cannot be achieved for the general model \eqref{model}. It is however possible to carry it out for a local approximation of the $\dot x$ part of the model about $x=0$ as well as for an upper-bound of $\dot{x}$. Indeed, consider the local approximation of $\dot x$ near $(x_p,y_p)$. From \eqref{sys_lin}, and considering the change of variable $z_1=\frac{m}{g'(0)}\ln\left(\frac{x}{\bar x}\right)$ (which is obviously increasing in $x$), we get:
\[\dot{z}_1=S_l-m y_p(t)\]
where $S_l=\frac{mf'(0)}{g'(0)}\leq S$.

Now we consider the general $\dot x$ equation of model \eqref{model} under the hypothesis that the system is in the positively invariant set  $\{\tilde{y}\geq 0\}$. We get for all $t\geq t_0$:
\[\dot x\leq f(x)-g(x)y_p(t)
\]
Let us make the change of variable $z_2=m\int_{\bar{x}}^x \frac{1}{g(x)}dx$ (which is increasing in $x$), we get:
\[\dot z_2\leq S-my_p(t)
\]
whose right hand side is similar to the $\dot z_1$ equation. 

In the sequel we will then focus on the system:
\begin{equation}\label{zsystem}
\dot z=\sigma-my_p(t)
\end{equation}
with $z$ standing for $z_1$ or $z_2$ and $\sigma$ for $S_l$ or the original $S$. Through equation \eqref{zsystem} we will thus study in one step the exact local approximation of our system as well as an upper bound of it, yielding a locally optimal result as well as a globally sup-optimal one.

\subsection{Preliminary results}

To state our main optimisation result, we first need some preliminary computations. Since we assume that the stability condition of the pest free solution holds true, we have:

\vspace*{0.5cm}
\noindent \textsc{Hypotheses 2 (H2):} \textit{The biological control budget is chosen such that: $\mu>\sigma$}

We first show that provided the release period $T$ is not too large, the pest population is decreasing for $t\geq t_0$. 

\begin{proposition}\label{prop:Tm}
There exists a release period $\hat{T}$ such that if $T<\hat{T}$, the pest population is decreasing for $t\geq t_0$.
\end{proposition}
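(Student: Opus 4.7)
The plan is to work directly with the $z$-system \eqref{zsystem} since, through the strictly increasing change of variable, $z$ being non-increasing (respectively strictly decreasing) translates to the pest population $x$ being so. In both the linear reading ($z=z_1$, $\sigma=S_l$) and the upper-bound reading ($z=z_2$, $\sigma=S$) of \eqref{zsystem}, a sufficient condition for $\dot z<0$ on $[t_0,\infty)$ is that $m y_p(t)>\sigma$ for every $t$; in the $z_2$ case this uses the invariance of $\{\tilde y\ge 0\}$ (so that $y\ge y_p$) together with (H1-iii) to rewrite $\dot x\le f(x)-g(x)y_p(t)$ as $\dot z_2\le \sigma - m y_p(t)$.

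Since $y_p$ is $T$-periodic and satisfies $\dot y_p = -m y_p$ between releases, it decays exponentially on each interval $(nT,(n+1)T]$ and hence attains its minimum just before the following release:
\[
\min_{t\ge t_0} y_p(t) \;=\; \frac{\mu T e^{-mT}}{1-e^{-mT}} \;=\; \frac{\mu T}{e^{mT}-1}.
\]
The required pointwise inequality $m y_p(t)>\sigma$ therefore reduces to $\varphi(T):=\dfrac{m\mu T}{e^{mT}-1}>\sigma$.

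It remains to exhibit $\hat T>0$ such that $\varphi(T)>\sigma$ for every $T\in(0,\hat T)$. Continuity of $\varphi$ on $(0,\infty)$ is immediate; a Taylor expansion at $0$ gives $\varphi(T)\to\mu$, which exceeds $\sigma$ by (H2), while $\varphi(T)\to 0$ as $T\to\infty$. The key small computation is the strict monotonicity of $u\mapsto u/(e^u-1)$ on $(0,\infty)$: the sign of its derivative is that of $e^u(1-u)-1$, which vanishes at $u=0$ and has derivative $-ue^u\le 0$, hence is non-positive (and strictly negative for $u>0$). Thus $\varphi$ is continuous and strictly decreasing from $\mu>\sigma$ down to $0$, and the intermediate value theorem yields a unique $\hat T>0$ with $\varphi(\hat T)=\sigma$, so that $\varphi(T)>\sigma$ for every $T<\hat T$, concluding the proof. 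The only mildly technical point in the whole argument is this monotonicity computation; everything else reduces to (H2) and the explicit form of $y_p$.
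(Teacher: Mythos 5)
Your proof is correct and follows essentially the same route as the paper: reduce to the pointwise condition $m\,y_p(t)>\sigma$, compute $\min_t y_p(t)=\mu T/(e^{mT}-1)$, and define $\hat T$ as the period where $m\mu T/(e^{mT}-1)$ crosses $\sigma$, using (H2) for the limit $\mu>\sigma$ as $T\to 0^+$. The only difference is that you explicitly verify the strict monotonicity of $u\mapsto u/(e^u-1)$, which the paper merely asserts.
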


\begin{proof} Consider equation \eqref{zsystem}. Provided   $\min_t{y_p(t)}>\frac{\sigma}{m}$, $z$ is decreasing for $t\geq t_0$; then so does the pest population $x$. Notice that:
\[\min_t y_p(t) =\frac{\mu T}{e^{mT}-1}\]
is a decreasing function of $T$ with:
\[\lim_{T\rightarrow 0^+}\min_t y_p(t)=\frac{\mu}{m}>\frac{\sigma}{m}\] 
since the stability condition holds. 

Then Proposition \ref{prop:Tm} holds true with $\hat{T}$ solution of:
\beq\label{tm}\frac{\mu \hat{T}}{e^{m\hat{T}}-1}=\frac{\sigma}{m}
\eeq\end{proof}

Proposition \ref{prop:Tm} has practical interest regarding growers concerns: indeed it is not interesting to invest in preventative releases of biological control agents and allow an invading pest population to proliferate after the moment of invasion. It is much more desirable to choose a procedure that will always make the pest population decreasing after the invasion. Thus in the sequel we assume that:

\newpage
\vspace*{0.5cm}
\noindent \textsc{Hypotheses 3 (H3):} \textit{The release period is chosen such that $T\in(0,\hat{T})$.}

We now seek the moment of invasion $t_0$ that maximises the damage costs $J$ (\ie  that maximises the damage time $\Pi$) for a given invading pest population $x_0>\bar x$ (corresponding to $z_0>0$) and a given release period $T\in(0,\hat{T})$.  Notice that with the $z$ formalism, we have $\Pi=t_f-t_0$ with $z(t_0)=z_0$ and $z(t_f)=0$. We have:

\begin{lemma}[\cite{Mailleret2006}]\label{lem:worst_t0}
Suppose $z_0>0$ and $T\in(0,\hat{T})$ are fixed, then one of the following holds:
\begin{enumerate}\item[(i)] $\exists k\in\mathbb{N},\ (\mu-\sigma) kT=z_0$\ then\ $\Pi=kT$\vspace*{.05cm}
\item[(ii)] $\exists k\in\mathbb{N},~\max_{t_0}\Pi(t_0)=\Pi(t_0^*)=(k+1)T-t_0^*$ (\ie  $z((k+1)T)=0$)\end{enumerate}\end{lemma}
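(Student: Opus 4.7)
The plan is to reduce the terminal-time condition $z(t_f)=0$ to a purely integral equation in $(t_0,\Pi)$ and then read off the maximiser. Integrating \eqref{zsystem} between $t_0$ and $t_f=t_0+\Pi$ with $z(t_0)=z_0$, $z(t_f)=0$ yields
\[
z_0 = \int_{t_0}^{t_0+\Pi}\bigl(m y_p(\tau)-\sigma\bigr)\,d\tau.
\]
Under (H3) the integrand $\phi(\tau):=m y_p(\tau)-\sigma$ is strictly positive (Proposition~\ref{prop:Tm}), so $\Pi$ is unambiguously determined. A direct computation gives $\int_0^T y_p(\tau)\,d\tau = \mu T/m$, hence $\int_{a}^{a+T}\phi\,d\tau = T(\mu-\sigma)$ independently of $a$. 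Writing $\Pi=kT+r$ with integer $k\geq 0$ and $r\in[0,T)$, the above reduces to
\[
\Delta := z_0 - kT(\mu-\sigma) = \int_{t_0}^{t_0+r}\phi(\tau)\,d\tau.
\]

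Case (i) would then fall out immediately: if $z_0=kT(\mu-\sigma)$ for some $k\in\mathbb{N}$, then $\Delta=0$ and positivity of $\phi$ forces $r=0$, so $\Pi=kT$ whatever $t_0$, and the damage time does not depend on the invasion instant. In the complementary case I would pick the unique $k$ with $\Delta\in(0,T(\mu-\sigma))$; then $r(t_0)\in(0,T)$ is defined implicitly, and I would show that $r(t_0)$ is maximised precisely when the interval $[t_0,t_0+r]$ ends at a release instant, i.e.\ when $t_0+r(t_0)=T$. Substituting this gives $\Pi(t_0^*)=(k+1)T-t_0^*$ and $t_f=(k+1)T$, which is exactly alternative (ii).

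The crux is the monotonicity argument. Implicit differentiation of the defining identity gives $r'(t_0)=\bigl(\phi(t_0)-\phi(t_0+r(t_0))\bigr)/\phi(t_0+r(t_0))$. Because $y_p$ is strictly decreasing on each branch $(nT,(n+1)T)$ and jumps upward at every $nT$, I would distinguish two regimes. If $t_0+r(t_0)<T$ the interval lies inside a single decreasing branch, so $\phi(t_0)>\phi(t_0+r)$ and $r'(t_0)>0$. If $t_0+r(t_0)>T$, periodicity rewrites $\phi(t_0+r)=\phi(t_0+r-T)$ with $0<t_0+r-T<t_0<T$, and monotonicity on that single branch gives $\phi(t_0+r)>\phi(t_0)$, hence $r'(t_0)<0$. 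Thus $r$ increases up to the unique $t_0^*$ with $t_0^*+r(t_0^*)=T$, then decreases; the maximum is $r(t_0^*)=T-t_0^*$, yielding alternative (ii).

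The main obstacle I anticipate is bookkeeping around the discontinuities of $y_p$: because $\phi$ is only piecewise continuous, I need to check that $r(\cdot)$ is itself continuous across the transition $t_0+r=T$, which follows from continuity of $z(t)$ rather than of $\phi$; similarly the boundary behaviour at $t_0=0$ and $t_0\to T^-$ must be handled when reducing without loss of generality to $t_0\in[0,T)$, using the $T$-periodicity of $r(\cdot)$. Once that is in place, the sign-of-$r'$ analysis is clean and the two alternatives of the lemma follow at once.
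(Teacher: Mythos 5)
Your argument is correct, and while its analytic core coincides with the paper's (integrate \eqref{zsystem} over $[t_0,t_f]$ and differentiate the resulting identity implicitly in $t_0$; your expression for $r'(t_0)$ is exactly \eqref{eq:deriv_pi}--\eqref{deriv_theta} in disguise, since $\phi(t_0+r)=\phi(\theta(t_0))$ by $T$-periodicity and $r=\theta-t_0$), the logical organisation is genuinely different. The paper assumes a priori that a maximiser has both $t_0^*$ and $t_f$ strictly between release instants, shows that stationarity forces $\theta(t_0^*)=t_0^*$ and hence case (i), and then disposes of the boundary situations ($t_0^*=0$ and $t_f=(k+1)T$) by one-sided derivative computations. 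You instead fix the integer $k$ once and for all from $z_0$ alone, via $\Delta\in\bigl(0,T(\mu-\sigma)\bigr)$ and the fact that $\int_a^{a+T}\phi\,d\tau=T(\mu-\sigma)$ for every $a$, reduce the problem to the fractional part $r(t_0)$, and prove unimodality of $r$ directly by a sign analysis: $r'>0$ while $t_0+r<T$ and $r'<0$ while $t_0+r>T$, comparing $\phi$ at two points of the same decreasing branch of $y_p$. This buys a sharper global picture (the maximiser is the unique instant at which $t_f$ hits a release time, and $k$ is independent of $t_0$), at the price of having to justify continuity of $r$ and the existence and uniqueness of the crossing, which the paper's contradiction-plus-boundary-check route sidesteps. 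Two points you should make explicit to close the argument: the crossing $t_0^*+r(t_0^*)=T$ exists and is unique in $(0,T)$ --- for instance because $t_0\mapsto t_0+r(t_0)$ has derivative $\phi(t_0)/\phi(t_0+r)>0$ everywhere and exceeds $1$ in the first regime, or by continuity and $T$-periodicity of $r$ --- and at $t_0^*$ only one-sided derivatives of $r$ exist (positive on the left, negative on the right), which is precisely what identifies it as the maximum; both checks mirror the paper's treatment of the boundary cases.
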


We will first show case (i) and prove that, otherwise, either  case (ii) holds or $\Pi$ is maximum at $t_0^*=0$, this latter case being impossible.

\begin{proof}
Suppose that $\Pi(t_0)$ is maximum for $t_0^*\in (0,T)$ and such that $(t_0^*+\Pi(t_0^*)) \in (kT, (k+1)T)$ for some integer $k$. That is to say that we suppose that $\Pi(t_0)$ is maximum for both $t_0$ and $t_f=(\Pi(t_0)+t_0)$ being strictly within two moments that are multiples of $T$.

Now pick $t_0^m<t_0^*<t_0^M$ in $(0,T)$ such that  $(t_0+\Pi(t_0)) \in [kT, (k+1)T)$ for all $t_0\in[t_0^m,t_0^M]$. Within this set, define  $\theta(t_0)$ as:
\beq\label{def_theta}\Pi(t_0)=kT -t_0+\theta(t_0)\eeq
Integrating (\ref{zsystem}) between $t_0$ and $t_0+\Pi(t_0)$, we get:
\begin{eqnarray}\label{to_deriv_theta}
z(t_0+\Pi(t_0))&=&z_0+\sigma\Pi(t_0)-m\dst \int_{t_0}^{t_0+\Pi(t_0)} y_p(\tau)d\tau\nonumber\\
&=&z_0+\sigma\Pi(t_0)\nonumber\\&&-\dst\frac{m\mu T}{1-e^{-mT}}\dst\left(\int_{t_0}^T e^{-m\tau}d\tau +(k-1)\int_0^T e^{-m\tau}d\tau+\int_0^{\theta(t_0)}e^{-m\tau}d\tau
\right)\nonumber\\
&=&z_0+\sigma\Pi(t_0)-\dst\frac{\mu T}{1-e^{-mT}} \left(e^{-mt_0}-e^{-m\theta(t_0)}+k(1-e^{-mT} )\right)
\end{eqnarray}
which is, from the definition of $\Pi$, equal to $0$. 

From \eqref{def_theta}, we have:
\beq\label{eq:deriv_pi}\frac{d\Pi}{dt_0}(t_0)=\frac{d\theta}{dt_0}(t_0)-1
\eeq
Using this and differentiating (\ref{to_deriv_theta}) with respect to $t_0$ yields:
\beq\label{deriv_theta}
\frac{d \theta}{d t_0}(t_0)=\dst\frac{ \left(\frac{m\mu T}{1-e^{-mT}}\right)e^{-mt_0}-\sigma}{ \left(\frac{m\mu T}{1-e^{-mT}}\right)e^{-m\theta(t_0)}-\sigma}
\eeq
To have a maximum of $\Pi$ at $t_0^*$, it is required that:
\[\frac{d \Pi}{d t_0}(t_0^*)=0~~ \Leftrightarrow ~~\frac{d \theta}{d t_0}(t_0^*)=1\] 
so that from \eqref{deriv_theta}, we must have $\theta(t_0^*)=t_0^*$. Then, from (\ref{to_deriv_theta}) and \eqref{def_theta}, the integer $k$ must be  such that:
\[(\mu-\sigma)kT=z_0\]
and $\Pi=kT$ does not depend on $t_0$, which show case (i). Otherwise $\Pi(t_0)$ has no extremum at $t_0^*$ verifying $t_0^*\in (0,T)$ and $(t_0^*+\Pi(t_0^*))\in(kT,(k+1)T)$.

Two cases remain  to be studied: either $\Pi$ is maximum for $t_0=0$, or $t_0^*$ is such that $(t_0^*+\Pi(t_0^*))=(k+1)T$. The remaining cases $t_0^*=T$ and $(t_0^*+\Pi(t_0^*))=kT$ might be studied by $k$ reparametrisation. 

We concentrate first on the case $t_0^*=0$. Then from (\ref{eq:deriv_pi}) and (\ref{deriv_theta}) the right derivative of $\Pi$ at $t_0=0$ is:
\beq\label{eq:dpi_0}\frac{d \Pi}{d t_0}(t_0=0^+)= \frac{ \left(\frac{m\mu T}{1-e^{-mT}}\right)-\sigma}{ \left(\frac{m\mu T}{1-e^{-mT}}\right)e^{-m\theta(t_0)}-\sigma}-1\eeq
From Proposition \ref{prop:Tm}, we have $\forall T\in(0,\hat{T})$:
\[\sigma<\frac{m\mu T}{(1-e^{-mT})}e^{-mT}
\]
Since by definition $\theta(t_0)\leq T$ both the numerator and denominator of the fraction in the right hand side of \eqref{eq:dpi_0} are positive, the former being larger than the latter. $\frac{d \Pi}{d t_0}(t_0=0^+)$ is then positive so that $\Pi$ has a (local) minimum at $t_0=0$, ruling out the case $t_0^*=0$.

We now focus on the case where $t_0^*$ is such that $t_0^*+\Pi(t_0^*)=(k+1)T$. Then from (\ref{def_theta}), $\theta(t_0^*)=T$ and the left derivative of $\Pi$ at $t_0^*$ is:
\[\frac{d \Pi}{d t_0}(t_0^{*-})=\frac{ \left(\frac{m\mu T}{1-e^{-mT}}\right)e^{-mt_0^*}-\sigma}{ \left(\frac{m\mu T}{1-e^{-mT}}\right)e^{-mT}-\sigma}-1\]
In a very same way as in the previous case, since $T<\hat{T}$ we show that this derivative is positive. Similarly one can show that the right derivative of $\Pi$ at $t_0^*$ is negative. This can be performed through the reparametrisation of $k$ as $(k+1)$ while noticing that this corresponds to $\theta(t_0^*)=0$. 

Then for the considered $z_0$ and $T$, $\Pi$ is maximum for $t_0^*$ such that $(t_0^*+\Pi(t_0^*))=(k+1)T$, which completes the proof of case (ii).\end{proof}

Lemma \ref{lem:worst_t0} is quite natural. Indeed, from the linearity and periodicity of \eqref{zsystem}, the decrease that takes place  between times $T$ and $kT$ is independent of $t_0$. The worst case should then contain the end of the first time interval where the predators are scarce, rather than the beginning of the last interval where they are abundant. 

In the following we look for the release period $T\in(0,\hat{T})$ that minimises the worst case damage time: $\max_{t_0}\Pi$.

\subsection{Optimal Choice of the Release Period}

We now investigate which $T \in(0,\hat{T})$ minimises the worst case damage time $\max_{t_0}\Pi$ (\ie that minimises the worst case damages $\max_{t_0}J$). We get the following result:
\begin{theorem}\label{best_period}
Suppose $z_0>0$ is fixed. Let:
\beq\label{t1def}T_1=\frac{z_0}{\mu-\sigma}\eeq
Then there exists $n_0\in\mathbb{N}^*$ such that
$\dst\min_{T\in (0,\hat{T})}\max_{t_0}\Pi=T_1$ is reached at $T=T_n=\dst \frac{T_1}{n}$ for all integer $n>n_0$.
\end{theorem}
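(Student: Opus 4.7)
The plan is to use Lemma \ref{lem:worst_t0} to reduce the optimisation to a one-dimensional scalar question and then settle it with a concavity/chord inequality. First I would treat the candidate values $T = T_n := T_1/n$: as soon as $n$ is large enough that $T_n < \hat T$ (any $n > n_0 := \lceil T_1/\hat T\rceil$ works), the identity $(\mu-\sigma)\, n T_n = (\mu-\sigma)T_1 = z_0$ shows that case (i) of Lemma \ref{lem:worst_t0} applies with $k = n$. Hence $\Pi = n T_n = T_1$ independently of $t_0$, and in particular $\max_{t_0}\Pi = T_1$ at each such $T_n$, which already yields $\inf_{T\in(0,\hat T)}\max_{t_0}\Pi \leq T_1$.

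For any other $T \in (0, \hat T)$, case (i) of Lemma \ref{lem:worst_t0} is ruled out, so case (ii) gives $\max_{t_0}\Pi = (k+1)T - t_0^*$ with $z((k+1)T) = 0$ and $t_0^* \in (0, T)$. I would integrate \eqref{zsystem} between $t_0^*$ and $(k+1)T$, splitting the interval into the partial period $[t_0^*, T]$ and $k$ full periods, each of which contributes exactly $(\mu-\sigma)T$ to the decrease of $z$ since $y_p$ is $T$-periodic. This reduces the condition $z((k+1)T)=0$ to
\[
P_0(t_0^*) = (k+1)(\mu-\sigma)T - z_0,
\quad\text{where}\quad
P_0(s) := \frac{\mu T(1-e^{-ms})}{1-e^{-mT}} - \sigma s.
\]
Since $P_0(0) = 0$ and $P_0(T) = (\mu-\sigma)T$, the existence of $t_0^* \in (0, T)$ forces $kT < T_1 < (k+1)T$, so necessarily $k = \lfloor T_1/T\rfloor$.

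The core step is strict concavity. A direct calculation gives $P_0''(s) = -m^2\mu T e^{-ms}/(1-e^{-mT}) < 0$, so $P_0$ is strictly concave on $[0,T]$, and under (H3) also strictly increasing there. The chord joining $(0,0)$ to $(T,(\mu-\sigma)T)$ being the line $y = (\mu-\sigma) s$, strict concavity yields $P_0(s) > (\mu-\sigma)s$ for every $s \in (0, T)$. Applying this with $s := (k+1)T - T_1 \in (0,T)$ gives
\[
P_0((k+1)T - T_1) > (\mu-\sigma)((k+1)T - T_1) = (k+1)(\mu-\sigma)T - z_0 = P_0(t_0^*),
\]
so monotonicity of $P_0$ yields $t_0^* < (k+1)T - T_1$, hence $\max_{t_0}\Pi > T_1$. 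Combined with the first paragraph, this proves $\min_{T\in(0,\hat T)}\max_{t_0}\Pi = T_1$, attained precisely at the countable family $\{T_n\}_{n > n_0}$. The main obstacle I expect is isolating the function $P_0$ from the integration and recognising that its strict concavity is exactly what forces a time loss whenever the invasion does not synchronise with the release schedule; the rest is routine.
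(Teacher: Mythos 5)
Your proof is correct and takes essentially the same route as the paper: your identity $P_0(t_0^*)=(k+1)(\mu-\sigma)T-z_0$ is exactly the paper's equation \eqref{eq:t0z0}, and your chord inequality for the strictly concave $P_0$ is equivalent to the paper's observation that $\left(\frac{1-e^{-mt}}{t}\right)$ is decreasing, applied after writing $\max_{t_0}\Pi-T_1$ in the closed form \eqref{end_comes}. The only cosmetic difference is that you compare $P_0$ at $t_0^*$ and at $(k+1)T-T_1$ and invoke its monotonicity under (H3), rather than exhibiting the positive excess over $T_1$ explicitly.
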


\begin{proof}
Consider $T_1$ that is positive since $\mu$ is assumed to be greater than $\sigma$. It is then clear that there exists an integer $n_0$ such that for all integer $n> n_0, T_n<\hat{T}$. Now consider a $T_n$ with $n>n_0$; using (\ref{t1def}), we have:
\[(\mu-\sigma)nT_n=(\mu-\sigma)T_1=z_0\]
Such a $T_n$ corresponds to case (i) in Lemma \ref{lem:worst_t0}, thus we have $\Pi=T_1$ which does not depend on $t_0$. 

Now we show that $\Pi=T_1$ is the minimum (with respect to $T\in(0,\hat{T})$) of $\max_{t_0}\Pi$. Pick a $T\neq T_n$ for all $n>n_0$. For such a $T$, case (i) of Lemma \ref{lem:worst_t0} is ruled out and according to case (ii), we have:
\[\exists k\in\mathbb{N},~\max_{t_0}\Pi=\Pi(t_0^*)=(k+1)T-t_0^*\]
and $z((k+1)T)=0$. Using (\ref{to_deriv_theta}) and $\theta(t_0^*)=T$, we have:
\beq \label{eq:t0z0} z_0+\sigma\left((k+1)T-t_0^*\right)-(k+1)\mu T+\mu T\frac{1-e^{-mt_0^*}}{1-e^{-mT}}=0\eeq
so that:
\[(k+1)T=\frac{1}{\mu-\sigma}\left(
z_0-\sigma\ t_0^*+\mu T\frac{1-e^{-mt_0^*}}{1-e^{-mT}}
\right)
\]
which yields:
\begin{eqnarray}
 \max_{t_0}\Pi&=&(k+1)T-t_0^* \nonumber\\
&=& T_1+
\frac{\mu}{\mu -\sigma}\left(\frac{1-e^{-mt_0^*}}{1-e^{-mT}}\ T-t_0^*
\right)\label{end_comes}\end{eqnarray}

Notice that the function $\left(\frac{1-e^{-mt}}{t}\right)$ is decreasing of $t$, so that, since $t_0^*<T$, we have:
\[\left(\frac{1-e^{-mt_0^*}}{1-e^{-mT}}\ T-t_0^*
\right)> 0\]
Since from (H2), $\mu>\sigma$, we have shown that $\max_{t_0}\Pi$ is greater than $T_1$ for all period $T$ such that $\forall n>n_0,~T\neq T_n$, which completes the proof.\end{proof}

Theorem \ref{best_period} shows that for a given $z_0$ we have a countable infinite number of release periods, the $T_n$, that solves our optimisation problem. 

However, we have two difficulties with Theorem \ref{best_period}. On the one hand, through equation \eqref{zsystem} we studied both the local optimisation problem and the global suboptimal one. To solve these two, we need to choose a release period $T=T_n$, which depends on $\sigma$. In the local optimisation case, $\sigma$ equals $S_l=\frac{mf'(0)}{g'(0)}$ while in the global suboptimal one, $\sigma$ equals $S=\sup_{x\geq 0}\frac{m f(x)}{g(x)}$. There is no reason why $\frac{S}{S_l}$ would be rational, so that Theorem \ref{best_period} states that it is not possible to choose a release period $T$ that solves both the local and global optimisation problem (what remains however the objective).

On the other hand, it is to be noticed that $T_1$, which determines the optimal values of the release period, depends on some ``biological" values, namely the invading pest population level $x_0$ (\ie $z_0$) and the combination of parameters $\sigma$. This is quite a big drawback since biological parameters are usually badly known and the initial invading pest level $x_0$ is not known at all. Clearly such model-based optimisation approaches should take into account these uncertainties \cite{Loehle2006}. 

Hence, in the following, we will show how $T$ should be
chosen in order to ensure that the worst case damage time $\max_{t_0}\Pi$ remains close to its minimal value, independently of the value of $x_0$ and $\sigma$. In doing so, we will provide an almost optimal choice of $T$ that solves both the local and global optimisation problem and that is robust to the value of $x_0$ and to parameters uncertainty.

\subsection{Robustness Analysis}

In the preceding sections, we  wanted our choice of $T$ to minimise the difference between the time that it takes to reach $\bar x$ from $x_0$ for the worst possible choice of the initial time $t_0$. In addition to that, we now also want this to be robust with respect to the uncertainty on the initial condition $x_0$ and to the model parameters that we will denote $p$ in the sequel ($p$ is simply the pair $(\sigma, m)$. Indeed, in the
poorly measured conditions of crop culture and with the accompanying scarcely
known models, it is fundamental for a control method to be robust with respect
to these uncertainties. We then suppose that the ``true" $x_0$ and $p$ belongs to some compact sets:

\vspace*{0.5cm}
\noindent \textsc{Hypotheses 4 (H4):} {\it The uncertainties on the initial condition and the model parameters are bounded, with $z_0\in[\underline{z_0},\overline{z_0}]$ denoted $\mathcal{Z}$ and 
$p$ that belongs to a compact set $\mathcal{P}$.}

We of course still assume that the invading population is greater than the EIL $\bar{x}$ (\ie $\underline{z_0}>0$) and that the stability condition holds, \ie $\mu>\max_{p\in\mathcal{P}}(\sigma)$. Our optimisation procedure will be robust provided our choice of $T$ minimises $\max_{t_0}\Pi$ for the worst $z_0$ and $p$ choice within the sets defined in Hypotheses (H4). 

Notice that the minimum (with respect to $T$) of $\max_{t_0} \Pi$ is equal to $T_1$ that does depend on both $z_0$ and $p$. Thus in our robustness analysis it makes more sense to consider the deviation of $\max_{t_0} \Pi$ from $T_1$,  rather than the absolute value of $\max_{t_0} \Pi$. Hence, we first focus on the evolution of:
\[\max_{(z_0,p)\,\in\,\mathcal{Z}\times\mathcal{P}}\left[\left(\max_{t_0} \Pi(t_0,z_0,p)\right)-T_1(z_0,p)\right]\]
with respect to $T$. We have:

\begin{theorem} \label{theo:rob}
The function:
\beq\label{rob:deviation}T\mapsto \max_{(z_0,p)\,\in\,\mathcal{Z}\times\mathcal{P}} \left[\left(\max_{t_0} \Pi(t_0,z_0,p)\right)-T_1(z_0,p)\right]
\eeq
is increasing for $T$ smaller than some positive constant $T_L$. Moreover: 
\beq\label{lim:0}\lim_{T\rightarrow\,0^+} \max_{(z_0,p)\,\in\,\mathcal{Z}\times\mathcal{P}} \left[\left(\max_{t_0} \Pi(t_0,z_0,p)\right)-T_1(z_0,p)\right]=0\eeq
\end{theorem}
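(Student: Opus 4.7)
My plan hinges on the closed-form expression \eqref{end_comes} established in the proof of Theorem \ref{best_period}: whenever $T \ne T_n(z_0,p)$ for every $n$, one has
\[\max_{t_0}\Pi - T_1 \;=\; \frac{\mu}{\mu-\sigma}\left[\frac{1-e^{-m\alpha T}}{1-e^{-mT}}\,T \;-\; \alpha T\right], \qquad \alpha := \frac{t_0^{\ast}}{T} \in [0,1),\]
while the left-hand side vanishes in the exceptional case (i) of Lemma \ref{lem:worst_t0}. The crucial point is that the right-hand side depends on $z_0$ only through the phase $\alpha$, which is determined by the implicit equation \eqref{eq:t0z0}. Analysing that equation shows that for each integer $k \ge 0$ the map $t_0^{\ast} \in [0,T) \mapsto z_0$ is continuous and strictly decreasing with image an interval of length $(\mu-\sigma)T$, and as $k$ varies these intervals tile $(0,\infty)$. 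There is therefore a $T_L > 0$ (depending only on $L := \overline{z_0}-\underline{z_0} > 0$, on $\mathcal P$, and on H3) such that, for all $T \in (0,T_L)$ and all $p \in \mathcal P$, the interval $\mathcal Z$ contains a full tile, so the image of $\mathcal Z$ under $z_0 \mapsto \alpha$ is all of $[0,1)$.

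On $(0,T_L)$ the inner supremum then decouples and can be computed in closed form:
\[\sup_{z_0 \in \mathcal Z}\bigl[\max_{t_0}\Pi - T_1\bigr] \;=\; \frac{\mu}{m(\mu-\sigma)}\,\Phi(mT), \qquad \Phi(u) \;:=\; v(u) - 1 - \ln v(u), \quad v(u) \;:=\; \frac{u}{1-e^{-u}}.\]
This comes from extremising the bracketed quantity in $\alpha$: the first-order condition yields $\alpha^{\ast}(u) = u^{-1}\ln v(u) \in (0,1)$, and substituting back reduces to the displayed form. A short computation gives $v(u) > 1$ and $v'(u) = e^u(e^u - 1 - u)/(e^u-1)^2 > 0$ on $(0,\infty)$, whence $\Phi'(u) = v'(u)(1 - 1/v(u)) > 0$, so $\Phi$ is strictly increasing. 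Consequently, for each fixed $p$ the function $T \mapsto \frac{\mu}{m(\mu-\sigma)}\Phi(mT)$ is strictly increasing on $(0,T_L)$.

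Writing $\bar D(T)$ for the supremum in \eqref{rob:deviation}, we then have $\bar D(T) = \sup_{p \in \mathcal P}\frac{\mu}{m(\mu-\sigma)}\Phi(mT)$ on $(0,T_L)$. To upgrade strict monotonicity from each section (fixed $p$) to $\bar D$, I would use continuity in $p$ and compactness of $\mathcal P$ to ensure the sup is attained at some $p^{\ast}(T)$; then for $T_1 < T_2$ in $(0,T_L)$,
\[\bar D(T_2) \;\ge\; \left.\frac{\mu}{m(\mu-\sigma)}\Phi(mT_2)\right|_{p=p^{\ast}(T_1)} \;>\; \left.\frac{\mu}{m(\mu-\sigma)}\Phi(mT_1)\right|_{p=p^{\ast}(T_1)} \;=\; \bar D(T_1).\]
The main conceptual obstacle is that $\max_{t_0}\Pi - T_1$ is itself oscillatory in $T$ for fixed $(z_0,p)$, vanishing at every $T_n(z_0,p)$, so no pointwise monotonicity in $(z_0,p)$ is available; swapping the suprema and reparametrising by $\alpha$ is what uncovers the underlying monotone structure.

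Finally, \eqref{lim:0} follows from the local expansions $v(u) = 1 + u/2 + u^2/12 + O(u^3)$ and $\ln v(u) = u/2 - u^2/24 + O(u^3)$, which give $\Phi(u) = u^2/8 + O(u^3)$ as $u \to 0^+$. Hence $\bar D(T) = O(T^2)$ uniformly on the compact set $\mathcal P$ (H2 bounding $\mu/(\mu-\sigma)$ away from $+\infty$), so $\bar D(T) \to 0$ as $T \to 0^+$.
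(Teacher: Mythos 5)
Your proposal is correct and takes essentially the same route as the paper's proof: after restricting to $T$ small enough that the worst-case phase $t_0^*$ sweeps the whole period as $z_0$ ranges over $\mathcal{Z}$, you reduce the inner maximum to the same one-variable optimisation (your $\frac{\mu}{m(\mu-\sigma)}\Phi(mT)$ is exactly the paper's $\frac{\mu}{\mu-\sigma}H(T,p)$, with the same maximiser $\hat{t_0^*}$), and then prove monotonicity and the limit at $0^+$. Your tiling/monotonicity justification of the reduction and the Taylor expansion in place of the Hospital rule are only presentational refinements of the paper's argument, so no further comment is needed.
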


\begin{proof} 
From equation \eqref{end_comes}, we have:
\begin{eqnarray*}
\lefteqn{\max_{(z_0,p)\,\in\,\mathcal{Z}\times\mathcal{P}} \left[\left(\max_{t_0} \Pi(t_0,z_0,p)\right)-T_1(z_0,p)\right]}\\&&= \max_{(z_0,p)\,\in\,\mathcal{Z}\times\mathcal{P}}\left[
\frac{\mu}{\mu -\sigma}\left(\frac{1-e^{-mt_0^*(T,z_0,p)}}{1-e^{-mT}}\ T-t_0^*(T,z_0,p)\right)\right]\\
&~~~~~~~~~~~~~~~~~~~~~~~~~~~~~~&= \max_{p\,\in\,\mathcal{P}}\left[\frac{\mu}{\mu-\sigma}\max_{z_0\,\in\,\mathcal{Z}}
\left(\frac{1-e^{-mt_0^*(T,z_0,p)}}{1-e^{-mT}}\ T-t_0^*(T,z_0,p)
\right)
\right]
\end{eqnarray*}

We first concentrate on: 
\beq\label{worstz0}
\max_{z_0\,\in\,\mathcal{Z}}
\left(\frac{1-e^{-mt_0^*(T,z_0,p)}}{1-e^{-mT}}\ T-t_0^*(T,z_0,p)
\right)\eeq
for fixed $T$ and $p$. 
Let us introduce: \[T_l(\overline{z_0},\underline{z_0},p)=\min\left(\hat{T}(p),\frac{\overline{z_0}-\underline{z_0}}{2(\mu-\sigma)}\right)\]
with $\hat{T}(p)$ defined in equation \eqref{tm}. In the sequel, we consider that $T$ belongs to $\left(0,T_l(\overline{z_0},\underline{z_0},p)\right)$. On the one side, this ensures through Proposition \ref{prop:Tm}
that, independently of the actual parameters, the pest population (\ie $z$ in our new variables) always decreases after the invasion time $t_0$. 

On the other side, we have:
\[\frac{\overline{z_0}-\underline{z_0}}{T(\mu-\sigma)}>2
\]
Then, there exists $n\in\mathbb{N}$ and $z_{01},\ z_{02}\in[\underline{z_0},\overline{z_0}]$ such that:
\[\frac{z_{01}}{T(\mu-\sigma)}=n~~\text{and}~~\frac{z_{02}}{T(\mu-\sigma)}=n+1
\]
From Theorem \ref{best_period}, $T$ is thus an optimal period for both the initial conditions $z_{01}$ and $z_{02}$.

We first show by contradiction that for all $z_0\in(z_{01},z_{02})$, case (ii) of Lemma \ref{lem:worst_t0} holds for $k=n$. Indeed suppose that for a given $z_0\in(z_{01},z_{02})$, we have $k\leq (n-1)$, then:
\begin{eqnarray}
\max_{t_0}\Pi=(k+1)T-t_0^*
&\leq& (n-1+1)T-t_0^*\nonumber\\
&\leq& T_1(z_{01},p)-t_0^*\nonumber\\
&<& T_1(z_{0},p)-t_0^*
\label{eq:k<n}
\end{eqnarray}
since from \eqref{t1def}, $T_1$ is an increasing function of $z_0$. \eqref{eq:k<n} is not possible since from Theorem \ref{best_period}: $\dst\max_{t_0}\Pi\geq T_1(z_0,p)$ and $t_0^*\geq 0$. Therefore $k$ cannot be smaller or equal to $(n-1)$ for $z_0\in(z_{01},z_{02})$.

Now suppose that $k\geq(n+1)$ for some $z_0\in(z_{01},z_{02})$. From \eqref{eq:t0z0}, we have:
\begin{eqnarray}
z_0&=&(\mu-\sigma)kT+(\mu-\sigma)T+\sigma t_0^*-\mu T\frac{1-e^{-m t_0^*}}{1-e^{-mT}}\nonumber\\
&\geq& (\mu-\sigma)(n+1)T-\sigma(T- t_0^*)+\mu T\left(1-\frac{1-e^{-m t_0^*}}{1-e^{-mT}}\right)\nonumber\\
&\geq& z_{02} +\left[\frac{\mu(T-t_0^*)}{e^{mT}-1}\left( T\left(\frac{e^{m(T-t_0^*)}-1}{T-t_0^*}\right)-\frac{\sigma}{\mu}\left(e^{mT}-1\right)\right)\right]\label{eq:k>n}
\end{eqnarray}
The bracketted expression in \eqref{eq:k>n} is positive since:
\[\frac{e^{m(T-t_0^*)}-1}{T-t_0^*}\geq m
\]
and from \eqref{tm}:
\[\forall T\in (0,\hat{T}),~mT>\frac{\sigma}{\mu}(e^{mT}-1)\]
Hence, \eqref{eq:k>n} contradicts the hypothesis that $z_0<z_{02}$ so that $k$ cannot be greater or equal to $(n+1)$ for $z_0\in(z_{01},z_{02})$. 
Thus, we have shown that for all $z_0\in(z_{01},z_{02})$, case (ii) of Lemma \ref{lem:worst_t0} holds for $k=n$.

Since $T$ is an optimal period for the initial condition $z_{01}$, we should have by continuity:
\[\lim_{z_0\rightarrow z_{01}^+} \Pi(t_0^*(T,z_0,p))=(n+1)T-\lim_{z_0\rightarrow z_{01}^+}t_0^*(T,z_0,p)=T_1(z_{01},p)
\]
Then:
\begin{eqnarray*}
\lim_{z_0\rightarrow z_{01}^+}t_0^*(T,z_0,p)&=&(n+1)T-T_1(z_{01},p)\nonumber\\
&=& (n+1)T-\frac{z_{01}}{\mu-\sigma}T=T
\end{eqnarray*}
A similar argumentation on $z_{02}$ yields:
\begin{equation*}\lim_{z_0\rightarrow z_{02}^-}t_0^*(T,z_0,p)=0
\end{equation*}
From \eqref{eq:t0z0}, $t_0^*(T,z_0,p)$ is a continuous function of $z_0$. Then, as $z_0$ covers $[z_{01},z_{02}]$, $t_0^*(T,z_0,p)$ reaches its whole interval of definition $[0,T]$ so that we have:
\begin{eqnarray*}
 \max_{z_0\,\in\,[z_{01},z_{02}]}
\lefteqn{\left(\frac{1-e^{-mt_0^*(T,z_0,p)}}{1-e^{-mT}}\ T-t_0^*(T,z_0,p)
\right)}\\&~~~~~~~~~~~~~~~~~~~~~~~~~~~~~~~~~~~~~~&=\max_{t_0^*\in[0,T]}\left(\frac{1-e^{-mt_0^*}}{1-e^{-mT}}\ T-t_0^*
\right)\\
&&=\max_{z_0\,\in\,\mathcal{Z}}
\left(\frac{1-e^{-mt_0^*(T,z_0,p)}}{1-e^{-mT}}\ T-t_0^*(T,z_0,p)
\right)
\end{eqnarray*}
Differentiating $\left(\frac{1-e^{-mt_0^*}}{1-e^{-mT}}\ T-t_0^*\right)$ with respect to $t_0^*$, we show that it reaches its maximum for:
\begin{equation}\label{eq:hat_t0}
\hat{t_0^*}=\frac{1}{m}\ln\left(\frac{mT}{1-e^{-mT}}\right)
\end{equation}
$\hat{t_0^*}>0$ since $mT> 1-e^{-mT}$. In addition, $\hat{t_0^*}<T$ since    $ e^{mT}>\frac{mT}{1-e^{-mT}}$. Then:
\begin{eqnarray*}\max_{z_0\,\in\,\mathcal{Z}}
\left(\frac{1-e^{-mt_0^*(T,z_0,p)}}{1-e^{-mT}}\ T-t_0^*(T,z_0,p)
\right)&=& \frac{1-e^{-m\hat{t_0^*}}}{1-e^{-mT}}\ T-\hat{t_0^*}\\
&=& \frac{1}{m}\left[\frac{mT}{1-e^{-mT}}-1-\ln\left(\frac{mT}{1-e^{-mT}}
\right)
\right]\\&\triangleq& H(T,p)
\end{eqnarray*}
$H(T,p)$ is an increasing function of $T$ on $(0,T_l(\underline{z_0},\overline{z_0},p))$ since:
\[\frac{\partial H(T,p)}{\partial T} = \frac{e^{-mT}(e^{mT}-1-mT)}{(1-e^{-mT})^2}\left[1-\frac{1-e^{-mT}}{mT}\right]>0\]
Moreover, through the Hospital rule, we have for all $p\in\mathcal{P}$:
\[\lim_{T\rightarrow\ 0^+} H(T,p)=0\]

Let us introduce: \[T_L=\min_{p\in\mathcal{P}}T_l(\underline{z_0},\overline{z_0},p)>0\]
Through the above analysis, it has been shown that for $T\in(0,T_L)$ and $p\in\mathcal{P}$,  $H(T,p)$ is an increasing function of $T$ with $\lim_{T\rightarrow 0^+} H(T,p)=0$.

We now come back to the function defined in \eqref{rob:deviation}. We have:
\[\max_{(z_0,p)\,\in\,\mathcal{Z}\times\mathcal{P}} \left[\left(\max_{t_0} \Pi(t_0,z_0,p)\right)-T_1(z_0,p)\right]=\max_{p\in\mathcal{P}}\left(
\frac{\mu}{\mu-\sigma}\ H(T,p)\right)\]
Then, from $H(T,p)$ properties,  for $T$ smaller than $T_L$ the worst deviation of $\max_{t_0}\Pi$ from $T_1$ (according to $(z_0,p)\in\mathcal{Z}\times\mathcal{P}$) is an increasing function of $T$. Moreover:
\[\lim_{T\rightarrow\,0^+} \max_{(z_0,p)\,\in\,\mathcal{Z}\times\mathcal{P}} \left[\left(\max_{t_0} \Pi(t_0,z_0,p)\right)-T_1(z_0,p)\right]=0\]
which concludes the proof.\end{proof}

One can deduce the following Corollary from Theorem \ref{theo:rob}:

\begin{corollary}\label{col:rob}
Let $\dst\overline{T}=\min_{p\,\in\,{\mathcal P}}\hat{T}(p)$. Then:
\beq\label{robustness_crit}
\inf_{T\,\in\,(0,\overline{T})}\max_{(z_0,p)\,\in\,\mathcal{Z}\times\mathcal{P}}
\left[\left(\max_{t_0\,\in\,[0,T]}\Pi(t_0,z_0,p)\right)-T_1(z_0,p)\right]=0\eeq
is achieved for $T=0^+$.
\end{corollary}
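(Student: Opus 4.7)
The plan is to read the Corollary directly off Theorem~\ref{theo:rob} together with Theorem~\ref{best_period}, which together pin the infimum both from below and from above.

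First, I would establish the trivial lower bound. By Theorem~\ref{best_period}, for every fixed $(z_0,p)\in\mathcal{Z}\times\mathcal{P}$, the quantity $T_1(z_0,p)$ equals $\min_{T\in(0,\hat{T}(p))}\max_{t_0}\Pi(t_0,z_0,p)$. Consequently, for every admissible $T\in(0,\overline{T})\subseteq(0,\hat{T}(p))$ and every $(z_0,p)\in\mathcal{Z}\times\mathcal{P}$, the bracketed deviation
\[
\bigl(\max_{t_0\in[0,T]}\Pi(t_0,z_0,p)\bigr)-T_1(z_0,p)\ \geq\ 0,
\]
so the outer maximum over $(z_0,p)$ is non-negative, and therefore the infimum over $T\in(0,\overline{T})$ is at least $0$.

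Next I would produce the matching upper bound via the limit in Theorem~\ref{theo:rob}. Since $T_L=\min_{p\in\mathcal{P}}T_l(\underline{z_0},\overline{z_0},p)\leq\min_{p\in\mathcal{P}}\hat{T}(p)=\overline{T}$, the interval $(0,T_L)$ is contained in $(0,\overline{T})$, so evaluating the infimum on this smaller sub-interval only enlarges it. Then the limit \eqref{lim:0} furnishes, for every $\varepsilon>0$, a $T_\varepsilon\in(0,T_L)\subseteq(0,\overline{T})$ with
\[
\max_{(z_0,p)\in\mathcal{Z}\times\mathcal{P}}\bigl[\bigl(\max_{t_0}\Pi(t_0,z_0,p)\bigr)-T_1(z_0,p)\bigr]\ <\ \varepsilon,
\]
so the infimum is at most $0$.

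Combining the two bounds gives the equality in \eqref{robustness_crit}. Finally, for the claim that the infimum is achieved ``at $T=0^+$'', I would invoke the monotonicity part of Theorem~\ref{theo:rob}: the worst-case deviation is an increasing function of $T$ on $(0,T_L)$ with limit $0$ at $0^+$, so the infimum is not attained at any positive $T$, but is reached asymptotically as $T\to 0^+$. No serious obstacle is anticipated here, as the Corollary is purely a repackaging of Theorem~\ref{theo:rob}; the only mild care needed is to check that $T_L\leq\overline{T}$ (immediate from the definitions of $T_l$ and $\hat{T}$) so that the interval on which the monotonicity and limit hold lies inside the interval over which the infimum is taken.
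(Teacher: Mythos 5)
Your computation of the value of the infimum is correct, and your lower bound is actually a cleaner route than the paper's: invoking Theorem \ref{best_period} to get $\max_{t_0}\Pi(t_0,z_0,p)\geq T_1(z_0,p)$ for every $T\in(0,\overline{T})\subseteq(0,\hat{T}(p))$ and every $(z_0,p)$, and then pairing this with the limit \eqref{lim:0} on $(0,T_L)\subseteq(0,\overline{T})$, does give $\inf=0$ over the whole interval $(0,\overline{T})$.

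The gap is in your last step, where you conclude that ``the infimum is not attained at any positive $T$'' from the monotonicity statement of Theorem \ref{theo:rob}. That monotonicity (and hence strict positivity of the worst-case deviation) is only established on $(0,T_L)$, and $T_L=\min_{p\in\mathcal{P}}T_l(\underline{z_0},\overline{z_0},p)$ involves the quantity $\frac{\overline{z_0}-\underline{z_0}}{2(\mu-\sigma)}$, so it can be strictly smaller than $\overline{T}=\min_{p\in\mathcal{P}}\hat{T}(p)$; the interval $[T_L,\overline{T})$ may be non-empty. On that interval your Theorem-\ref{best_period} bound only yields that the deviation is $\geq 0$, so you have not excluded that it vanishes at some positive $T\geq T_L$, in which case the infimum would also be attained at a positive release period and the corollary's conclusion (and the practical recommendation that follows from it) would not stand. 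This is exactly what the second half of the paper's proof supplies: for $T\in[T_L,\overline{T})$, the worst-case deviation $\max_{z_0\in\mathcal{Z}}\left(\frac{1-e^{-mt_0^*(T,z_0,p)}}{1-e^{-mT}}\,T-t_0^*(T,z_0,p)\right)$ could vanish only if every $z_0\in\mathcal{Z}$ fell into case (i) of Lemma \ref{lem:worst_t0}, i.e.\ satisfied $z_0=n(\mu-\sigma)T$ for some integer $n$, which is impossible for $z_0$ ranging over an interval of positive length; hence the deviation is strictly positive on $[T_L,\overline{T})$. You need to add this (or an equivalent strict-positivity argument on $[T_L,\overline{T})$) to justify that the infimum is achieved only as $T\rightarrow 0^+$; the corollary is not a pure repackaging of Theorem \ref{theo:rob}.
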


\begin{proof}
From Theorem \ref{theo:rob}:
\beq\label{rob:fin1}
\inf_{T\,\in\,(0,T_L)}\max_{(z_0,p)\,\in\,\mathcal{Z}\times\mathcal{P}}
\left[\left(\max_{t_0\,\in\,[0,T]}\Pi(t_0,z_0,p)\right)-T_1(z_0,p)\right]=0
\eeq
and is achieved for $T=0^+$ since the argument of the infimum in \eqref{rob:fin1} is increasing of $T$ and tends to $0$ as $T$ does. 

To complete the proof of Corollary \ref{col:rob} we have to study:
\[\max_{(z_0,p)\,\in\,\mathcal{Z}\times\mathcal{P}}
\left[\left(\max_{t_0\,\in\,[0,T]}\Pi(t_0,z_0,p)\right)-T_1(z_0,p)\right]\]
for $T\in\left[T_L, \overline{T}\right)$. Notice that from $T_L$ definition, we have:
\[T_L=\min_{(z_0,p)\in\mathcal{Z}\times\mathcal{P}}\left(\min\left(\hat{T}(p),\frac{\overline{z_0}-\underline{z_0}}{2(\mu-\sigma)}\right)\right)=\min\left(\overline{T},\min_{(z_0,p)\in\mathcal{Z}\times\mathcal{P}} \left(\frac{\overline{z_0}-\underline{z_0}}{2(\mu-\sigma)}\right)\right)
\]
so that the set $\left[T_L, \overline{T}\right)$ is non-empty if and only if: $\overline{T}>
\min_{(z_0,p)\in\mathcal{Z}\times\mathcal{P}} \left(\frac{\overline{z_0}-\underline{z_0}}{2(\mu-\sigma)}\right)$. Suppose that this holds and that $T\in\left[T_L, \overline{T}\right)$, then we have:
\[\max_{z_0\in\mathcal{Z}}\left(\frac{1-e^{-mt_0^*(T,z_0,p)}}{1-e^{-mT}}-t_0^*(T,z_0,p)\right)>0\]
since otherwise it would be required that $\forall z_0\in\mathcal{Z}$ there would exist $n\in\mathbb{N}$ such that $T=\frac{z_0}{n(\mu-\sigma)}$ which is obviously not possible. Then, $\forall T\in[T_L,\overline{T})$:
\begin{eqnarray*}
\max_{(z_0,p)\,\in\,\mathcal{Z}\times\mathcal{P}}
\lefteqn{\left[\left(\max_{t_0\,\in\,[0,T]}\Pi(t_0,z_0,p)\right)-T_1(z_0,p)\right]=}\\&~~~~~~~~~~~~~~~~~~~~~~~~~&\dst\max_{p\in\mathcal{P}}\left[\frac{\mu}{\mu-\sigma}\max_{z_0\in\mathcal{Z}}
\left(\frac{1-e^{-mt_0^*(T,z_0,p)}}{1-e^{-mT}}-t_0^*(T,z_0,p)\right)\right]
\end{eqnarray*}
this latter being positive, which concludes the proof.\end{proof}

With the help of Theorem \ref{theo:rob}, we know that provided $T$ is not too large, the smaller $T$ is chosen, the less will be the worst case (according to parameters and initial condition uncertainty) deviation of $\max_{t_0}\Pi$ from its minimal value $T_1$. It has moreover been shown that as $T$ tends to $0$, the worst case  deviation of $\max_{t_0}\Pi$ from its minimal value goes to $0$ as well.

We conclude from this analysis that the robust choice of $T$, as defined by
the criterion (\ref{robustness_crit}) consists in choosing $T=0$. However, it is obviously not possible to achieve such a $T$ in practical applications: it would consist in continuously releasing predators into the crop culture. As an alternative, and since the argument of the infimum in (\ref{robustness_crit}) is an increasing function of $T$ for $T\in(0,T_L)$, it is then advised to take $T$ as small as possible to achieve the best possible robustness. 

A last remark should be made about the need for robustness. In the preceding
analysis, we have concentrated on a single model in the form:
\[
\dot z=S-y_p(t)
\]
We should remember that this model covers two cases: a local linearisation of the system (which yields a certain value of $S$) and a system that globally upper-bounds the actual nonlinear system (which yields another value of $S$). At the very least, the choice of $T$ that we made should work well in both those cases. We can then conclude that, even in the unrealistic case where the initial conditions and the parameters are very well-known, one should make a choice of $T$ that is sufficiently robust, so that both global
and local approximations are covered, \ie one should take $T$ as small as possible.

\section{Numerical Simulation}

To illustrate the robustness result stated in Theorem \ref{theo:rob} we performed a Monte Carlo like numerical simulation: an impulsive predator-prey model of the form \eqref{model} was considered with constant a priori chosen parameters for the continuous part of the model and $\mu$ verifying condition \eqref{cond_stab_glob}. A triplet of the remaining parameters was randomly chosen: first a release period  $T\in(0,T_L)$, the pest invasion moment $t_0\in[0,T)$ and the pest invasion level $x_0>\bar x$. The value of $T_L$ is easy to compute from \eqref{tm} since in this case the parameter set $\mathcal{P}$ is reduced to a singleton and that we have chosen a large  $\mathcal{Z}$ interval. The time to reach $\bar{x}$, $\Pi$, was then computed as well as its deviation from $T_1$. The process has been repeated until a set of $2.10^5$ different simulations was obtained and the results, the deviation of $\Pi$ from $T_1$, were plotted against the time period $T$.

The results of this numerical work is presented  on Figure \ref{fig:mc}, each of the dot corresponding to one of the $2.10^5$ simulations. What actually corresponds to Theorem \ref{theo:rob} is the upper envelope of the data set only (since it is the maximum with respect to parameters of the worst case, according to $t_0$, of $\Pi$).

\begin{figure}[h]\begin{center}
\includegraphics[width=.8\textwidth]{./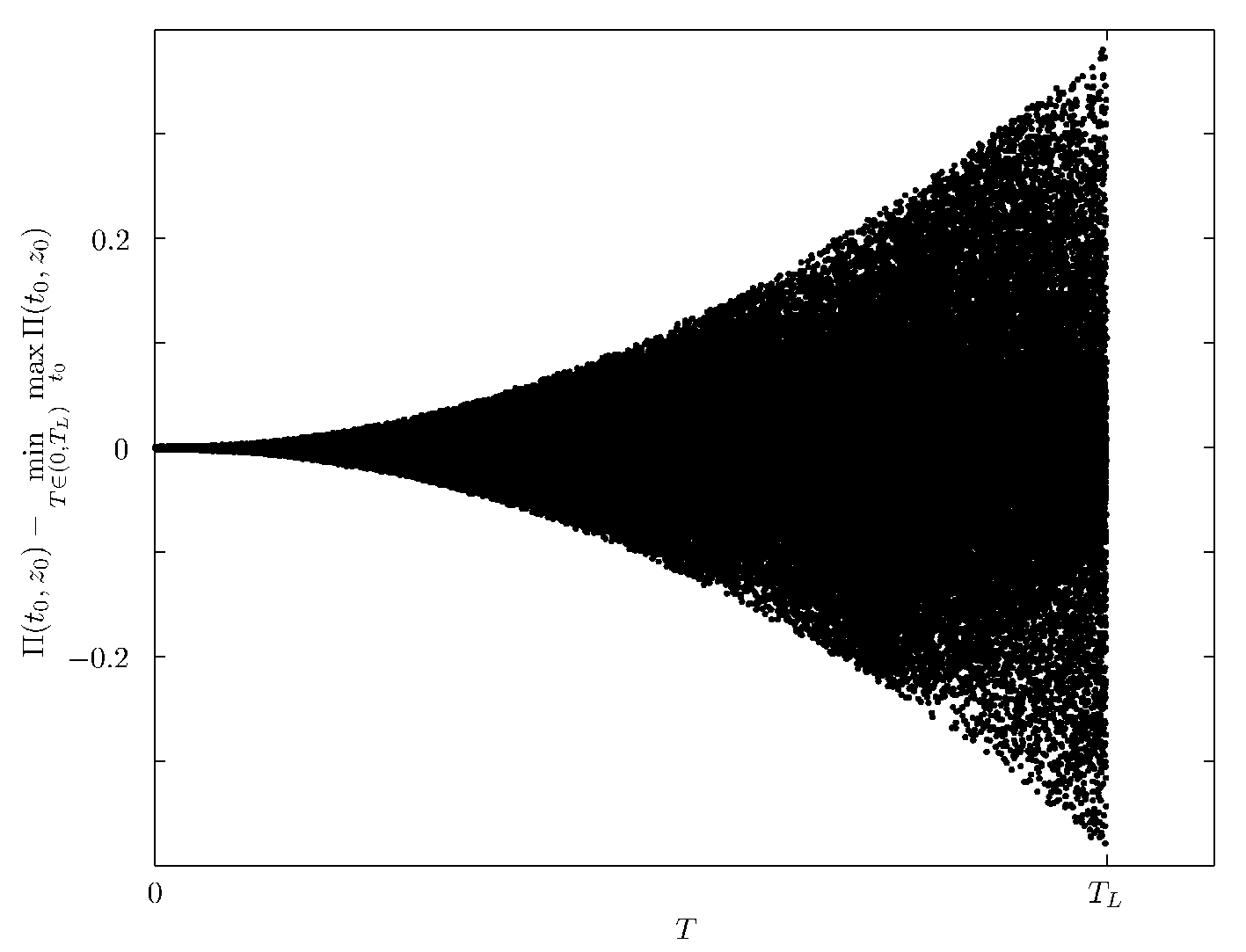}
\caption{Graphical illustration of Theorem \ref{theo:rob}. Each dot corresponds to the deviation of $\Pi$ from the minimal worst case $T_1$ for one of the $2.10^5$ randomly chosen triplet $T\in(0,T_L),~t_0\in(0,T)$ and $x_0>\bar{x}$.\label{fig:mc}}\end{center}
\end{figure}

As it can be noticed the smaller the choice of $T$, the smaller the positive deviation of $\Pi$ (thus of the crop damages) from its optimal value $T_1$. 
The price to pay is however that such a choice reduces also the negative deviation of $\Pi$ from $T_1$. Indeed an analysis similar to the one performed in Section \ref{sec:opt}, but focusing on: \[\min_{(z_0,p)\,\in\,\mathcal{Z}\times\mathcal{P}} \left[\left(\min_{t_0} \Pi(t_0,z_0,p)\right)-T_1(z_0,p)\right]\]
would show that this is a decreasing function of $T$ with limit $0$ as $T$ tends to $0^+$. Hence, if one feels lucky, a choice of $T$ small may not be the best one since one can obtain, fortunately, smaller $\Pi$ (thus smaller crop damages) if the actual $t_0$ is not the worst one.

\section{Conclusion}

In this contribution we studied a rather general predator-prey model with periodic impulsive additions  of predators (releases) that is relevant for biological control modelling. The impulsive releases of the biological control agents were modelled on the basis of a fixed budget to be spent per time period (release rate). As such it allowed to compare the different releases period as the same overall amount of predators was used. In a first step, it was shown that, provided the release rate of biocontrol agents was high enough, the biological control program allows to eradicate the prey (pest) population, whatever the value of the release period. Then the efficiency of different release periods (for a given release rate) were compared on the assumption that the biological control program is conducted on a preventative basis. It was shown that the time needed to eradicate an unforeseen pest invasion occurring at a worst moment had a minimum that could be achieved through the choice of a countable infinite number of release period. However, these release period values were strongly dependent on the model parameters and a robustness analysis of this optimisation problem showed that the smaller the release period, the less the time needed to eradicate the pest invasion, for the worst invasion moment and accounting for parameter uncertainty. Hence, as an advice to biological control practitioners, our study indicates that it is less risky to release frequent small amounts of biocontrol agents rather than massive infrequent ones. This result (frequent releases of small doses yield better control) is similar to the ones of \cite{Yano1989,Yano1989a,Fenton2001} that were all obtained from simulatory models. Such a biocontrol strategy is also advocated by \cite{Williams2001} from real life experiments and by \cite{Skirvin2002} on the basis of a simulatory model. However, to our knowledge, this contribution is the first to recommend such a strategy on the basis of a mathematical analysis.

In a broader context, we believe that the techniques used in this work may also be of interest for optimisation problems on impulsive models from other applied fields: for instance, optimisation of vaccination strategies in epidemiology \cite{Shulgin1998,D'onofrio2002a,Hui2004a} or optimisation of chemotherapy in cancer treatments \cite{Lakmeche2000,D'onofrio2004,D'onofrio2005}.

\end{document}